\newtheorem{theorem}{Theorem}
\newtheorem{corollary}[theorem]{Corollary}
\newtheorem{lemma}[theorem]{Lemma}
\newtheorem{proposition}[theorem]{Proposition}
\theoremstyle{definition}
\newcommand{\tightoverset}[2]{
  \mathop{#2}\limits^{\vbox to -.5ex{\kern-1.15ex\hbox{$#1$}\vss}}}
\newcommand{\conv}{
	\tightoverset{\boldsymbol{-}}{\ast}
}
\newcommand{\pol}{
	\operatorname{pol}
}
\newcommand{\supp}{
	\operatorname{supp}
}			
\newcommand\restr[2]{{
  \left.\kern-\nulldelimiterspace 
  #1 
  \vphantom{\big|} 
  \right|_{#2} 
}}
\newcounter{boldSectionCounter}
\newcounter{boldSubsectionCounter}
\newcommand{\codim}{\operatorname{codim}}
\newcommand{\boldSection}[1]{
   \large\begin{center}\noindent {\bfseries{\scshape \S \arabic{boldSectionCounter} #1}}\\[6pt]\end{center}\normalsize
   \stepcounter{boldSectionCounter}
	 \setcounter{boldSubsectionCounter}{1}
}
\def\blfootnote{\gdef\@thefnmark{}\@footnotetext}
\title{A bilinear version of Bogolyubov's theorem}
\date{}
\author[1]{W. T. Gowers}
\affil[1]{\footnotesize Royal Society 2010 Anniversary Research Professor, University of Cambridge}
\author{L. Mili\'cevi\'c}
\affil[2]{\footnotesize Mathematical Institute of the Serbian Academy of Sciences and Arts}
\begin{document}
\maketitle




\begin{abstract} A theorem of Bogolyubov states that for every dense set $A$ in $\mathbb{Z}_N$ we may find a large Bohr set inside $A+A-A-A$. In this note, motivated by the work on a quantitative inverse theorem for the Gowers $U^4$ norm, we prove a bilinear variant of this result in vector spaces over finite fields. Namely, if we start with a dense set $A \subset \mathbb{F}^n_p \times \mathbb{F}^n_p$ and then take rows (respectively columns) of $A$ and change each row (respectively column) to the set difference of it with itself, repeating this procedure several times, we obtain a bilinear analogue of a Bohr set inside the resulting set, namely the zero set of a biaffine map from $\mathbb{F}^n_p \times \mathbb{F}^n_p$ to a $\mathbb{F}_p$-vector space of bounded dimension. An almost identical result was proved independently by Bienvenu and L\^e.
\end{abstract}


\section{Introduction}

An important theorem of Bogolyubov~\cite{BogPaper} states that whenever $A$ is a dense subset of $\mathbb{Z}_N$, then $A+A-A-A$ contains a large Bohr set. This argument has played a crucial role in many important results in additive combinatorics. For example, in his groundbreaking proof of Freiman's theorem~\cite{Ruzsa}, Ruzsa makes a clever use of this argument, and in the proof of Szemer\'edi's theorem by the first author~\cite{GowersSzemeredi} the argument appears in several forms.\\

The main question that we consider in this note is how to generalize the Bogolyubov theorem to the bilinear setting. In this note, we focus on the case where the ambient group is $\mathbb{F}_p^n$.\\

Our motivation for this work was to obtain a tool that could be used to prove a quantitative inverse theorem for Gowers $U^4$ norms over finite fields. If we look at the proof of Szemer\'edi's theorem for arithmetic progressions of length 4 in~\cite{GowersSzemeredi}, or the proof of Green and Tao's quantitative inverse theorem for $U^3$ norms over finite fields~\cite{GTU3}, we see that Freiman's theorem, and Bogolyubov's argument in particular, play an important role. Thus, in order to come up with a formulation of Bogolyubov's argument in the bilinear setting, it is natural to examine the proof in~\cite{GowersSzemeredi} for arithmetic progressions of length 5. It turns out that the starting point is the following. If $A$ is a dense set of the ambient group $G$, without arithmetic progressions of length 5, then it has large $U^4$ norm. This in turn, after some algebraic manipulations, naturally gives a dense set $X \subset G \times G$ and a map $\phi \colon X \to G$ (which is defined by looking at large Fourier coefficients of some functions related to the indicator function for $A$), with the property that $\phi$ respects many pairs of vertical parallelograms of same width and height\footnote{A \emph{vertical parallelogram} of width $w$ and height $h$ is any quadruple of points of the form $(x,y), (x,y + h), (x + w, z), (x + w, z + h)$. We say that $\phi$ \emph{respects} a pair of vertical parallelograms $\Big((x,y), (x,y + h), (x + w, z), (x + w, z + h)\Big)$ and $\Big((x',y'), (x',y' + h), (x' + w, z'), (x' + w, z' + h')\Big)$ if $\phi(x,y) - \phi(x,y + h) - \phi (x + w, z) + \phi (x + w, z + h) = \phi(x',y') - \phi(x',y' + h) - \phi (x' + w, z') + \phi (x' + w, z' + h)$}. Once again, going back to the $U^3$ norm case, in that setting we get a map that has many additive quadruples, which immediately leads us to Freiman's theorem. Thus, vertical parallelograms should point us the way towards bilinear variants of Freiman's theorem and Bogolyubov argument.\\  

Given a set $B \subset G$, we may first define the set $B^1$ of all $(x, h)$ such that in the line $\{x\} \times G$ there is a vertical segment of height $h$ between points in $B$, i.e. there is $y \in G$ such that $(x,y), (x, y + h) \in B$. Let $B^2$ be the set of all $(w,h)$ such that there is a parallelogram of width $w$ and height $h$ in $B$. Observe crucially that $B^2$ is the set of all $(w, h)$ such that in the line $G \times \{h\}$ there is a horizontal segment of length $w$ between points in $B^1$. Thus, we obtain the set of all vertical parallelogram heights and widths by convolving $B$ with itself in the $y$-direction first, followed by a convolution in the $x$-direction, and finally taking the support of the resulting function. This motivates the idea that the bilinear version of Bogolyubov argument should involve convolving the starting set with itself a few times like this, each time in a fixed direction.\\

It remains to decide what kind of structure we should aim for inside the support of the convolved set. In the original Bogolyubov argument, we obtain a Bohr set, which is a set defined by solutions to several inequalities of the form $|\alpha(x)-1|\leq\epsilon$, where $\alpha$ is a character. In the finite-fields setting of a finite fields, this becomes solving $\alpha(x) = 0$ for several linear functionals $\alpha$. Thus, a natural candidate for the structured set would be the set of solutions to several equations of the form $\beta(x,y) = 0$, where each $\beta \colon G \times G \to \mathbb{F}_p$ is a bilinear map.\\


Let us note here that we also prove somewhat different versions of the bilinear Bogolyubov argument in the forthcoming papers~\cite{GM1},~\cite{GM2}. Our reason for writing a note on this result is that it might be of separate interest and it is not exactly what we need in the proof of the inverse theorem.\\

\section{Bilinear Bohr varieties and statement of results}

To state the theorem, we must first give a few definitions. Let $G$ and $W$ be $\mathbb{F}_p$-vector spaces\footnote{All vector spaces in this note are finite-dimensional.}. A subset $B \subset G \times G$ is called a \emph{bilinear Bohr variety} if there is a biaffine map (affine in each variable) $\beta \colon G\times G \to W$ such that $B = \{(x,y) \colon \beta(x,y) = 0\}$. If $k = \dim W$, then we say that $B$ has \emph{codimension} at most $k$ and write $\codim B \leq k$.\\

We convolve $A \subset G \times G$ with itself as follows. We first convolve once in $y$ direction, thus defining
\[A^{(1)} = \cup_{x \in G}\hspace{2pt} \{x\} \times (A_{x \cdot} - A_{x \cdot}).\]
Next, we convolve twice in $x$-direction, setting
\[A^{(2)} = \cup_{y \in G}\hspace{2pt} (A^{(1)}_{\cdot y} + A^{(1)}_{\cdot y} - A^{(1)}_{\cdot y} - A^{(1)}_{\cdot y}) \times \{y\}.\]
Finally, we convolve twice in $y$-direction
\[A^{(3)} = \cup_{x \in G}\hspace{2pt} \{x\} \times (A^{(2)}_{x \cdot} + A^{(2)}_{x \cdot} - A^{(2)}_{x \cdot} - A^{(2)}_{x \cdot}).\]

\begin{theorem}\label{mainTheorem} Let $A \subset G \times G$ be a set of density $c$. Define the $A^{(3)}$ as above. Then $A^{(3)}$ contains $(U \times V) \cap \mathbf{B^L}$ where $U, V$ are subspaces and $\mathbf{B^L}$ is a bilinear Bohr variety defined by bilinear maps, such that
\[\codim U, \codim V, \codim \mathbf{B^L} \leq p^{\exp (\log^{O(1)} c^{-1})}.\]
\end{theorem}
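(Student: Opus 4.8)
The plan is to mimic the classical Bogolyubov argument twice, once in each direction, passing through Fourier analysis on $G \times G$ and using the structure of large spectra. Throughout write $G = \mathbb{F}_p^n$ and identify characters with elements of $G$ via $(x,y) \mapsto \omega^{r\cdot x + s\cdot y}$.

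\medskip

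\emph{Step 1: the first convolution and a fibrewise Bogolyubov.} Since $A$ has density $c$, for a positive proportion of rows $x$ the fibre $A_{x\cdot}$ has density $\gtrsim c$ in $G$. For each such $x$, the classical Bogolyubov argument applied to $A_{x\cdot} - A_{x\cdot}$ (a single difference set is already enough to produce a Bohr set of bounded codimension, since $\widehat{1_{A_{x\cdot}}}$ has a large spectrum) shows that $A^{(1)}_{x\cdot} = A_{x\cdot} - A_{x\cdot}$ contains a subspace $U_x$ with $\codim U_x \leq O(c^{-2})$. The key point — and this is where one has to be careful — is to extract from the family $\{U_x\}$ a single large set of rows on which the spectrum, hence the subspace, can be taken to be \emph{the same}. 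This is a pigeonhole on the (boundedly many) possible large spectra: the set $\Spec_\rho(1_{A_{x\cdot}}) = \{s : |\widehat{1_{A_{x\cdot}}}(s)| \geq \rho c\}$ has size $\leq \rho^{-2}c^{-1}$ by Parseval, so there are at most $|G|^{\rho^{-2}c^{-1}}$ possibilities; but that bound depends on $n$. Instead one should argue more cleverly: define $\phi(x)$ to be (a basis of) the span of $\Spec_\rho(1_{A_{x\cdot}})$ and observe that what we really need downstream is not that $\phi$ is constant but that the row-set $\{x : \text{some fixed linear condition holds on } A^{(1)}_{x\cdot}\}$ is dense. In fact it is cleaner to keep the whole function $x \mapsto \widehat{1_{A_{x\cdot}}}(s)$ for each fixed $s$ and feed it into the next step.

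\medskip

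\emph{Step 2: the horizontal double convolution.} Now $A^{(1)} \subset G \times G$ still has density $\gtrsim c$ (each good row contributes a subspace of bounded codimension). We convolve in the $x$-direction: $A^{(2)}_{\cdot y} = A^{(1)}_{\cdot y} + A^{(1)}_{\cdot y} - A^{(1)}_{\cdot y} - A^{(1)}_{\cdot y}$. By classical Bogolyubov applied columnwise, for a dense set of columns $y$, $A^{(2)}_{\cdot y}$ contains $\{r : \widehat{1_{A^{(1)}_{\cdot y}}}(r) \text{ small for } r \notin H_y\}$, i.e. a translate-free Bohr set which here (four-fold sumset, positivity of $\widehat{1}^2\overline{\widehat{1}}^2 \geq 0$ after Fourier inversion) is exactly a subspace $\{r : r \cdot u = 0 \ \forall u \in S_y\}$ where $S_y = \Spec_\rho(1_{A^{(1)}_{\cdot y}})$, $|S_y| \leq \rho^{-2}c^{-1}$. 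The crucial structural input is that the function $(r,y) \mapsto \widehat{1_{A^{(1)}_{\cdot y}}}(r)$, thought of as depending on $y$, is itself $1_{A^{(1)}}$-Fourier-transformed in one coordinate; so the condition ``$r \in S_y$'' is governed by a bounded number of bilinear-type conditions in $(r, y)$. Concretely: $r$ and $y$ together should be made to satisfy $\beta(r,y) = 0$ for a bounded-dimensional biaffine $\beta$. This is the heart of the matter and is where the ``bilinear'' structure genuinely enters — one must argue that the large-spectrum condition varies \emph{biaffinely}, not arbitrarily, in $y$, exploiting that $A^{(1)}$ came from a genuine set and not an adversarial family of fibres.

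\medskip

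\emph{Step 3: the vertical double convolution and assembling the Bohr variety.} Finally one convolves $A^{(2)}$ twice more in the $y$-direction to form $A^{(3)}$. Running Bogolyubov one more time produces, on a dense set of rows, a subspace in the $y$-fibre cut out by the spectrum of $1_{A^{(2)}_{x\cdot}}$; one tracks, exactly as in Step 2, that this spectrum condition is again biaffine in $(x,\cdot)$. Combining the three rounds: the surviving set of rows forms (after intersecting the relevant dense sets, which can be taken to be subspaces $U, V$ by the standard trick of passing to a large subspace on which a Bohr set is full-dimensional — or by Freiman/Ruzsa-type covering) and the accumulated spectral conditions from all rounds assemble into a single biaffine map $\mathbf{B^L}$ of dimension at most the product of the per-round bounds. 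Iterating a $c^{-O(1)}$-codimension bound through three rounds, each feeding density $\gtrsim c$ into the next but degrading the Bohr-set codimension, yields the stated tower-type bound $p^{\exp(\log^{O(1)} c^{-1})}$: the exponential comes from the fact that the codimension of the Bohr variety at each stage controls the density of the relevant fibre-set at the next stage only after passing to a subspace, so codimensions multiply across the $O(1)$ rounds, and the $p^{(\cdot)}$ reflects that a codimension-$k$ bilinear condition can cut density by $p^{-k}$.

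\medskip

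\textbf{Main obstacle.} The genuinely hard step is Step 2 (and its mirror, Step 3): showing that the large-spectrum sets $S_y$ of the fibres $A^{(1)}_{\cdot y}$ vary \emph{biaffinely} in $y$ rather than wildly. Naively pigeonholing for a fixed spectrum wastes a factor of $|G|^{O(c^{-1})}$, which destroys the quantitative conclusion; the fix is to work with the bilinear Fourier transform $\widehat{1_{A^{(1)}}}(r,s) = \sum_{x,y} 1_{A^{(1)}}(x,y)\omega^{r\cdot x + s\cdot y}$ directly and observe that ``$r$ is in the spectrum of the $y$-fibre'' is, after a further application of Bogolyubov/Bourgain-type sumset estimates, captured by the vanishing of a \emph{bilinear} form whose rank is controlled by a global $L^2$ (or $L^4$) bound on $\widehat{1_{A^{(1)}}}$ — this is exactly the mechanism that the vertical-parallelogram discussion in the introduction is pointing at. Getting the bookkeeping of these rank bounds to close up, and in particular keeping the three subspaces $U$, $V$ and the bilinear Bohr variety $\mathbf{B^L}$ simultaneously of codimension at most $p^{\exp(\log^{O(1)}c^{-1})}$, is the technical core of the proof.
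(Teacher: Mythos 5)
Your high-level architecture — one vertical convolution, then two horizontal, then two more vertical, with a Bogolyubov step at each stage — matches the paper's, and you are right to single out the linearization of the fibrewise large spectrum as the crux. But you stop at naming the difficulty and do not supply the idea that resolves it, so as written the proof has a genuine gap.

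The mechanism the paper uses is not a rank bound on the two-variable Fourier transform $\widehat{1_{A^{(1)}}}$. It is an inverse sumset argument: for $f(x,y)=A_{x\cdot}\conv A_{x\cdot}(y)$, Lemma~13.1 of~\cite{GowersSzemeredi} shows that any selection $\sigma(y)$ of a large Fourier coefficient of $f_{\cdot y}$ gives a map $y\mapsto\sigma(y)$ with $\gtrsim\xi^{12}N^3$ additive quadruples. The Balog--Szemer\'edi--Gowers theorem plus Sanders's bound for Freiman's theorem then upgrade $\sigma$ to an affine map $\alpha_i$ agreeing with it on a dense set, and iterating and deleting exhausts all large coefficients in $\exp(\pol(\log c^{-1}))$ rounds. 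That is the paper's Corollary~\ref{FourierLinearity2}, and it is what turns ``$r$ lies in the spectrum of $f_{\cdot y}$'' into the biaffine conditions $x\cdot\alpha_i(y)=0$. A global $L^2$ or $L^4$ bound on $\widehat{1_{A^{(1)}}}$ by itself cannot confine the spectrum to a bounded family of affine graphs; you really need a Freiman-type input, and without it the quantitative conclusion collapses exactly in the way your ``naive pigeonholing'' warning anticipates.

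Two further issues. First, Step~1 asserts that $A_{x\cdot}-A_{x\cdot}$ already contains a subspace of codimension $O(c^{-2})$; this is false (niveau-set examples show that a single difference or sum set of a dense set need not contain any large coset), and the paper does not apply Bogolyubov at the first convolution — it only records that $\mathbb{E}f\geq c^2$. Second, Step~3 is not ``Step~2 in the other direction''. After the final Bogolyubov one cannot re-run the linearization; instead the paper needs Proposition~\ref{subspaceDenseContainment} to produce a subspace $V$ of $x$-values on which $\mathbf{B}_{x\cdot}\cap Y'$ remains dense (so the second Bogolyubov has density to work with), and then the coset Fourier identity of Lemma~\ref{subspaceFourier} to show $T_x\subset T+\{\alpha_i^{LT}(x):i\in[k]\}$ where $T$ is the large spectrum of $1_{Y'}$. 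It is this last computation, not a repeat of the BSG step, that makes the three rounds assemble into the single set $(V\times T^\perp)\cap\mathbf{B^L}$ with $\mathbf{B^L}$ cut out by the linear parts $x\cdot\alpha^L_i(y)$.
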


We remark that very similar results were proved independently by Bienvenu and L\^e in~\cite{BienvenuLe}.\\

This theorem has the following rather pleasant corollary.

\begin{corollary}\label{bisubspace} Let $A \subset G \times G$ be any set of density $c$ such that, $A_{x \cdot}$ and $A_{\cdot y}$ are subspaces for all $x, y$ (possibly empty sets). Then $A$ contains $(U \times V) \cap \mathbf{B^L}$ where $U, V$ are subspaces and $\mathbf{B^L}$ is a bilinear Bohr variety defined by bilinear maps, such that
\[\codim U, \codim V, \codim \mathbf{B^L} \leq p^{\exp (\log^{O(1)} c^{-1}))}.\]
\end{corollary}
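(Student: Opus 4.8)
The plan is to deduce Corollary~\ref{bisubspace} directly from Theorem~\ref{mainTheorem} by showing that under the stated hypothesis, the convolution procedure does nothing: namely $A^{(3)} = A$, after which the conclusion is literally that of the main theorem. So the whole task reduces to checking that each of the three convolution steps preserves the set, using only the fact that every row $A_{x\cdot}$ and every column $A_{\cdot y}$ is a subspace (or empty).

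First I would record the elementary observation that if $S \subseteq G$ is a subspace then $S - S = S$ and $S + S - S - S = S$, while if $S = \emptyset$ then all of these are still $\emptyset$. Applying this row-by-row to $A$ gives $A^{(1)} = \bigcup_{x} \{x\} \times (A_{x\cdot} - A_{x\cdot}) = \bigcup_x \{x\} \times A_{x\cdot} = A$. The one thing to verify carefully here is that the rows of $A^{(1)}$ are again subspaces; but since $A^{(1)} = A$ as sets, its rows are exactly the rows of $A$, so this is immediate, and likewise its columns are the columns of $A$ and hence subspaces too. Then the second step gives $A^{(2)} = \bigcup_y (A^{(1)}_{\cdot y} + A^{(1)}_{\cdot y} - A^{(1)}_{\cdot y} - A^{(1)}_{\cdot y}) \times \{y\} = \bigcup_y A_{\cdot y} \times \{y\} = A$, using that each column is a subspace or empty. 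The same bookkeeping shows the rows and columns of $A^{(2)}$ coincide with those of $A$, so the third step gives $A^{(3)} = \bigcup_x \{x\} \times (A^{(2)}_{x\cdot} + A^{(2)}_{x\cdot} - A^{(2)}_{x\cdot} - A^{(2)}_{x\cdot}) = \bigcup_x \{x\} \times A_{x\cdot} = A$.

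With $A^{(3)} = A$ established, Theorem~\ref{mainTheorem} applied to $A$ yields subspaces $U, V$ and a bilinear Bohr variety $\mathbf{B^L}$ with $(U \times V) \cap \mathbf{B^L} \subseteq A^{(3)} = A$ and all three codimensions bounded by $p^{\exp(\log^{O(1)} c^{-1})}$, which is exactly the claim. There is essentially no obstacle here: the only point requiring any care is the inductive tracking of the ``rows and columns are subspaces'' property through the two intermediate sets $A^{(1)}$ and $A^{(2)}$, and that is trivial once one notices that each convolution step in fact returns the set unchanged rather than merely a superset. (If one preferred not to argue set-equality at each stage, one could instead just note $A \subseteq A^{(1)} \subseteq A^{(2)} \subseteq A^{(3)}$ is false in general and $A \supseteq$ fails too, so genuine equality is the clean route — which is why the subspace hypothesis on \emph{both} rows and columns is used, once at the $y$-step, once at the $x$-step, and once more at the final $y$-step.)
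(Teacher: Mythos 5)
Your proof is correct and is the natural deduction the paper intends: since every row and every column of $A$ is a subspace or empty, each of the three convolution steps leaves the set unchanged, so $A^{(3)} = A$ and Theorem~\ref{mainTheorem} gives the conclusion directly. The paper states the corollary without a proof, so there is nothing to compare against, but your argument is exactly the right one; the only cosmetic remark is that your closing parenthetical is slightly muddled (the clean statement is simply that all you need is $A^{(3)} \subseteq A$, and in fact equality holds at every stage under the hypothesis).
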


\iftrue
\else

We also apply Theorem~\ref{mainTheorem} to deduce a classification theorem for trilinear forms of low rank. For $\mathbb{F}_p$-vector spaces $V, W$, we denote the space of all linear maps from $V$ to $W$ by $\mathcal{L}(V, W)$.

\begin{theorem}[Structure of low-rank trilinear forms.]\label{trilinearTheorem} Let $U, V, W$ be vector spaces over $\mathbb{F}_p$, and let $T \colon U \to \mathcal{L}(V, W)$ be a linear map with the property that for at least $c |U|$ of $u \in U$, we have $\operatorname{rank} T(u) \leq k$. Then, we may find subspaces $U' \leq U,$ $V' \leq V$ and $E \leq W$ such that, if we write $\xi = \log^{O(1)} c^{-1} k^{O(1)} \log^{O(1)} p$
\[\codim_U U', \codim_V V' \leq \exp(\exp(\xi)),\hspace{2pt}\dim E \leq \exp(\exp(\exp(\xi))),\text{ and}\]
\[T(u)(v) \in E.\]\end{theorem}

In order to deduce this theorem, we prove another result that might be of independent interest, which we record here as well.

\begin{proposition}[Ramsey-like theorem for bilinear maps.]\label{dichotomy} For any $a,b \in \mathbb{N}$, there is $K$ (we may take $K = (2p)^{\frac{1}{2}(a+1)^2 b}$) such that the following holds. If we are given $\mathbb{F}_p$-vector spaces $X, Y, Z$ and a bilinear map $\beta \colon X \times Y \to Z$ such that $\operatorname{im} \beta$ contains at least $K$ linearly independent vectors, then there is $x \in X$ such that $\operatorname{rank} \beta_{x \cdot} \geq a$ or there is $y \in Y$ such that $\operatorname{rank} \beta_{\cdot y} \geq b$.\end{proposition}

\fi

It would be interesting to find the correct bounds, or at least better ones, for Theorem~\ref{mainTheorem}. Also, given the additional algebraic structure, it is possible that Corollary~\ref{bisubspace} can be proved more directly, but at the time of writing we do not see how to do this. \\


\noindent\textbf{Notation.} We will write $G$ for the ambient group, which is currently $G = \mathbb{F}_p^n$, and $N$ for the size of $G$. For a subset $S \subset G \times G$, we write $S_{x \cdot} = \{y \in G\colon (x,y) \in S\}$ and $S_{\cdot y} = \{x \in G\colon (x,y) \in S\}$. Similarly, if $f\colon A \subset G \times G \to X$ is a function, for $x \in G$, we define function $f_{x \cdot} \colon A_{x \cdot} \to X$ by $f_{x \cdot}(y) = f(x,y)$, and we analogously define $f_{\cdot y} \colon A_{\cdot y} \to X$.\\
\indent As is customary, Fourier coefficients will be defined using expectations. Thus, for $f \colon G \to \mathbb{C}$, we define $\hat{f}(r) = \mathbb{E}_{x \in G} f(x) \omega^{-r \cdot x} = \frac{1}{N} \sum_{x \in G} f(x) \omega^{-r \cdot x}$. Also, we use a slightly non-standard convolution and we write $f \conv g(x) = \mathbb{E}_y f(y+x)\overline{g(y)}$.\\
\indent Write $f \leq \pol(a_1, \dots, a_k)$ or $f = \pol(a_1, \dots, a_k)$, if $f$ is a function of $a_1, \dots, a_k$, and there is a polynomial $p$ in $a_1, \dots, a_k$, with positive coefficients only, such that $f = O(p)$. We also allow more complex, but natural, notation such as $\exp(-\pol(x)) \leq f \leq \exp(\pol(x))$ (this means that $|\log f| \leq \pol(x)$). This is less standard substitute for the equivalent $X = O(Y^{O(1)})$ notation, with aim to make the bounds in proofs more readable.\\



\noindent\textbf{Acknowledgements.} The second author would like to thank Trinity College and the Department of Pure Mathematics and Mathematical Statistics at the University of Cambridge for their generous support while this work was carried out. He also acknowledges the support of the Ministry of Education, Science and Technological Development of the Republic of Serbia, Grant III044006.


\section{Proof of the main theorem}

Before we embark on the proof, we need to recall a couple of lemmas. As one might expect, the first one is the usual Bogolyubov argument.

\begin{lemma}\label{bogArg} Let $f \colon G \to [0,1]$ be given. Then $\operatorname{supp} (f \conv f) \conv (f \conv f)$ contains $S^\perp$, where $S = \Big\{r \colon \Big|\widehat{f}(r)\Big| \geq \Big|\widehat{f}(0)\Big|^{\frac{3}{2}}\Big\}$ and $|S| \leq \Big| \widehat{f}(0)\Big|^{-2}$. \end{lemma}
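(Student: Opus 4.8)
The plan is to follow the classical Bogolyubov argument, being careful to track which normalization of the Fourier transform and convolution is being used. First I would compute the Fourier transform of $g := (f \conv f) \conv (f \conv f)$. With the expectation-normalized transform $\widehat{f}(r) = \mathbb{E}_x f(x)\omega^{-r\cdot x}$ and the convolution $f \conv g(x) = \mathbb{E}_y f(y+x)\overline{g(y)}$, one checks that $\widehat{f \conv g}(r) = \widehat{f}(r)\overline{\widehat{g}(r)}$; applying this twice gives $\widehat{g}(r) = |\widehat{f}(r)|^4 \geq 0$ for all $r$. In particular $\widehat{g}$ is real and nonnegative everywhere.

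Next I would write, by Fourier inversion, $g(x) = \sum_r \widehat{g}(r)\,\omega^{r\cdot x} = \sum_r |\widehat{f}(r)|^4 \omega^{r \cdot x}$. To show $x \in \operatorname{supp}(g)$ for every $x \in S^\perp$, it suffices to show $g(x) > 0$ there. Split the sum into $r \in S$ and $r \notin S$. For $r \in S$ we have $r \cdot x = 0$ (since $x \in S^\perp$), so each such term equals $|\widehat{f}(r)|^4 \geq 0$, and the $r = 0$ term alone contributes $|\widehat{f}(0)|^4 > 0$ (assuming $f$ is not identically zero; otherwise the statement is vacuous). For $r \notin S$ we bound $|\widehat{f}(r)|^4 = |\widehat{f}(r)|\cdot|\widehat{f}(r)|^3 < |\widehat{f}(0)|^{3/2} \cdot |\widehat{f}(r)|^3 \le |\widehat{f}(0)|^{3/2} |\widehat{f}(r)|^2 |\widehat{f}(0)|$, since $|\widehat{f}(r)| \le |\widehat{f}(0)|$ always. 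Hence the total contribution of the terms with $r \notin S$ is, in absolute value, at most $|\widehat{f}(0)|^{5/2} \sum_{r} |\widehat{f}(r)|^2 = |\widehat{f}(0)|^{5/2}\, \mathbb{E}_x f(x)^2$ by Parseval, and since $0 \le f \le 1$ this is at most $|\widehat{f}(0)|^{5/2}\,\mathbb{E}_x f(x) = |\widehat{f}(0)|^{7/2} < |\widehat{f}(0)|^4$ is false in general — so instead I would use $\sum_r |\widehat f(r)|^2 = \mathbb{E} f^2 \le \mathbb{E} f = |\widehat f(0)|$, giving the bound $|\widehat f(0)|^{3/2}|\widehat f(0)| \cdot |\widehat f(0)| $ only after noting $\sum_{r \notin S}|\widehat f(r)|^2 \le |\widehat f(0)|$; thus the off-$S$ contribution is at most $|\widehat{f}(0)|^{3/2}\cdot|\widehat f(0)|\cdot|\widehat f(0)| $, which I must compare with the main term $|\widehat f(0)|^4$. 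Since $|\widehat f(0)| \le 1$ we have $|\widehat f(0)|^{3/2} \cdot |\widehat f(0)|^2 \ge |\widehat f(0)|^4$ only when — here I need the strict inequality in the definition of $S$ to force $|\widehat f(0)|^{3/2} \cdot |\widehat f(0)|^2 < |\widehat f(0)|^{4}$, equivalently $|\widehat f(0)|^{-1/2}<1$, which is false; so the correct split is $|\widehat f(r)|^4 \le |\widehat f(0)|^{3/2}|\widehat f(r)|^3 \le |\widehat f(0)|^{3/2}|\widehat f(r)|^2$, summed to give $\le |\widehat f(0)|^{3/2}\mathbb{E}f^2 \le |\widehat f(0)|^{3/2}\mathbb{E}f = |\widehat f(0)|^{5/2} < |\widehat f(0)|^2 \le |\widehat f(0)|^4/|\widehat f(0)|^2$, and since the main term is $\ge |\widehat f(0)|^2$...

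I will reorganize this cleanly: the main term (summing $r\in S$, all of which have $r\cdot x=0$) is at least $|\widehat f(0)|^2$ because it equals $\sum_{r\in S}|\widehat f(r)|^4 \ge |\widehat f(0)|^4 \ge$ nothing useful directly — rather, use that $\sum_{r \in S}|\widehat f(r)|^4 \ge |\widehat f(0)|^4$ and separately show $\big|\sum_{r\notin S}|\widehat f(r)|^4\omega^{r\cdot x}\big| \le |\widehat f(0)|^{3/2}\sum_r |\widehat f(r)|^3 \le |\widehat f(0)|^{3/2}\cdot|\widehat f(0)|\cdot\sum_r|\widehat f(r)|^2 \le |\widehat f(0)|^{3/2}\cdot|\widehat f(0)|\cdot|\widehat f(0)| = |\widehat f(0)|^{7/2} < |\widehat f(0)|^4$? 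No: $7/2 > 4$ is false. The genuine argument (this is standard) uses $L^\infty$–$L^2$: $\sum_{r\notin S}|\widehat f(r)|^4 \le \big(\max_{r\notin S}|\widehat f(r)|^2\big)\sum_r |\widehat f(r)|^2 < |\widehat f(0)|^3 \cdot |\widehat f(0)| = |\widehat f(0)|^4$, using $\max_{r\notin S}|\widehat f(r)|^2 < |\widehat f(0)|^3$ and Parseval $\sum|\widehat f(r)|^2 = \mathbb{E}f^2 \le \mathbb{E}f = |\widehat f(0)|$. This is strictly less than the $r=0$ term $|\widehat f(0)|^4$, hence $g(x) > 0$, so $x \in \operatorname{supp}(g)$.

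Finally, the bound $|S| \le |\widehat f(0)|^{-2}$ is immediate from Parseval: $|S|\cdot|\widehat f(0)|^3 \le \sum_{r\in S}|\widehat f(r)|^2 \le \sum_r |\widehat f(r)|^2 = \mathbb{E}f^2 \le \mathbb{E}f = |\widehat f(0)|$, so $|S| \le |\widehat f(0)|^{-2}$. The one point requiring care — the ``main obstacle,'' such as it is — is getting the exponents in the $L^\infty$–$L^2$ bound to line up with the exponent $3/2$ in the definition of $S$ so that the off-$S$ mass is strictly dominated by the single term at $r=0$; the choice of threshold $|\widehat f(0)|^{3/2}$ is exactly what makes $\max_{r\notin S}|\widehat f(r)|^2 \cdot \mathbb{E}f^2 < |\widehat f(0)|^4$ go through, and one must also handle the trivial case $\widehat f(0)=0$ (where $f\equiv 0$ and the statement is vacuous) separately.
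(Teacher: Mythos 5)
Despite the visible false starts (which you flag and discard yourself), the argument you finally commit to is exactly the paper's: compute $\widehat{(f\conv f)\conv(f\conv f)}(r)=|\widehat f(r)|^4$, invert, split at $S$, bound the off-$S$ mass by $\max_{r\notin S}|\widehat f(r)|^2\cdot\sum_r|\widehat f(r)|^2<|\widehat f(0)|^3\cdot|\widehat f(0)|$ via Parseval and $0\le f\le 1$, and compare with the on-$S$ contribution $\ge|\widehat f(0)|^4$; the count $|S|\le|\widehat f(0)|^{-2}$ is also the same Parseval estimate. Your extra care about the normalization of $\conv$ and the degenerate case $\widehat f(0)=0$ is reasonable precision the paper leaves implicit.
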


\begin{proof}We have $\widehat{f \conv f}(r) = \Big|\widehat{f}(r)\Big|^2$ and $\widehat{(f \conv f) \conv (f \conv f)}(r) = \Big|\widehat{f}(r)\Big|^4$. Thus,
\[\begin{split}(f \conv f) \conv (f \conv f)(x) = \sum_{r} \widehat{(f \conv f) \conv (f \conv f)}(r) \omega^{rx} = 
\sum_r \Big|\widehat{f}(r)\Big|^4 \omega^{rx} = \sum_{r \in S} \Big|\widehat{f}(r)\Big|^4 \omega^{rx} + \sum_{r \notin S} \Big|\widehat{f}(r)\Big|^4 \omega^{rx}.\end{split}\]
When $x \in S^\perp$, we have
\[\sum_{r \in S} \Big|\widehat{f}(r)\Big|^4 \omega^{rx} = \sum_{r \in S} \Big|\widehat{f}(r)\Big|^4 \geq \Big|\widehat{f}(0)\Big|^4.\]
On the other hand, 
\[\bigg|  \sum_{r \notin S} \Big|\widehat{f}(r)\Big|^4 \omega^{rx} \bigg| \leq  \sum_{r \notin S} \Big|\widehat{f}(r)\Big|^4 < \Big|\widehat{f}(0)\Big|^3  \sum_{r} \Big|\widehat{f}(r)\Big|^2 = \Big|\widehat{f}(0)\Big|^3  \mathbb{E}_y \Big|f(y)\Big|^2 \leq \Big|\widehat{f}(0)\Big|^3  \mathbb{E}_y \Big|f(y)\Big| = \Big|\widehat{f}(0)\Big|^4,\]
thus $(f \conv f) \conv (f \conv f)(x) \not= 0$, and hence $S^\perp \subset \supp (f \conv f) \conv (f \conv f)$. Finally, 
\[\Big|\widehat{f}(0)\Big|^3 \Big|S\Big| \leq \sum_r \Big|\widehat{f}(r)\Big|^2 = \mathbb{E}_x |f(x)|^2 \leq \mathbb{E}_x |f(x)| = \Big| \widehat{f}(0)\Big|,\]
implying that $|S| \leq \Big| \widehat{f}(0)\Big|^{-2}$.\end{proof}

The second ingredient we recall is the following lemma that appears implicitly in~\cite{GowersSzemeredi}. We actually couple the work in Lemma 13.1 and Corollary 13.2 with Sanders's bounds in Freiman's theorem~\cite{Sanders}. (Note that the proofs of Lemma 13.1 and Corollary 13.2 stay the same when translated to $\mathbb{F}^n_p$.)


\begin{corollary}\label{FourierLinearity2} Let $f \colon G\times G \to \mathbb{C}$ be any function with $\|f\|_\infty \leq 1$. For any $x, y$, define a function
\[g(x, y) = \Big(f_{x\cdot} \conv f_{x\cdot}\Big) (y).\] 
Given $\xi$ and a set $B \subset G$ with density $c$, there is $k = \exp(\pol(\xi^{-1}, c^{-1}))$ and there are affine functions $\alpha_1, \alpha_2, \dots, \alpha_k\colon G \to G$ and a subset $B' \subset B$ such that 
\[|B'| \geq (1-\xi) |B|,\]
and such that if $y \in B'$ and $\Big|\widehat{g_{\cdot y}}(r)\Big| \geq \xi$, then
\[r \in \Big\{\alpha_1(y), \dots, \alpha_k(y)\Big\}.\]
\end{corollary}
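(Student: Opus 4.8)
The plan is to reproduce, over $\mathbb{F}_p^n$ and with Sanders's bounds for Freiman's theorem~\cite{Sanders} substituted for the bound used there, the argument behind Lemma~13.1 and Corollary~13.2 of~\cite{GowersSzemeredi}. Everything rests on the identity
\[g(x,y) = \mathbb{E}_z f(x,z+y)\overline{f(x,z)},\]
which shows that for fixed $y$ the Fourier coefficient
\[\widehat{g_{\cdot y}}(r) = \mathbb{E}_x\mathbb{E}_z f(x,z+y)\overline{f(x,z)}\,\omega^{-r\cdot x}\]
is exactly the two-dimensional Fourier coefficient, trivial in the second coordinate, of the ``vertical derivative'' $(x,z)\mapsto f(x,z+y)\overline{f(x,z)}$ of $f$. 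Since $|g(x,y)|\le 1$ everywhere, Parseval in the $x$ variable gives $\sum_r|\widehat{g_{\cdot y}}(r)|^2 = \mathbb{E}_x|g(x,y)|^2\le 1$, so for every $y$ the large spectrum $\Sigma(y) := \{r : |\widehat{g_{\cdot y}}(r)|\ge\xi\}$ has $|\Sigma(y)|\le\xi^{-2}$. We must show that, after deleting at most a $\xi$-fraction of $B$, these sets are covered by $\exp(\pol(\xi^{-1},c^{-1}))$ affine images of $y$; we may discard at once the $y\in B$ with $\Sigma(y)=\emptyset$, for which there is nothing to prove, and hence assume that the set of remaining rows is dense.

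The crux is a $U^3$-type energy estimate for the spectral graph $\Gamma := \{(y,r) : y\in B,\ r\in\Sigma(y)\}\subseteq G\times G$. Expanding a quantity of the form
\[\mathbb{E}_{\substack{y_1+y_2=y_3+y_4\\ y_i\in B}}\ \sum_{\substack{r_i\in\Sigma(y_i)\\ r_1+r_2=r_3+r_4}}\widehat{g_{\cdot y_1}}(r_1)\,\widehat{g_{\cdot y_2}}(r_2)\,\overline{\widehat{g_{\cdot y_3}}(r_3)}\ \overline{\widehat{g_{\cdot y_4}}(r_4)},\]
unwinding the definition of $g$, and applying Cauchy--Schwarz several times, one uses the cocycle identity for vertical derivatives,
\[\big(f(x,z+a)\overline{f(x,z)}\big)\big(f(x,z+a+b)\overline{f(x,z+a)}\big) = f(x,z+a+b)\overline{f(x,z)},\]
exactly as in~\cite{GowersSzemeredi} (compare also the $U^3$ argument of~\cite{GTU3}), to conclude that $\Gamma$ has large additive energy: the number of quadruples $(\gamma_1,\gamma_2,\gamma_3,\gamma_4)\in\Gamma^4$ with $\gamma_1+\gamma_2=\gamma_3+\gamma_4$ is at least $\pol(\xi^{-1},c^{-1})^{-1}|\Gamma|^3$. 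The same holds with $B$ replaced by any dense subset of the rows, which is all the iteration below will need.

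From here the argument is standard. Balog--Szemer\'edi--Gowers, applied to $\Gamma$, yields $\Gamma'\subseteq\Gamma$ with $|\Gamma'|\ge\pol(\xi^{-1},c^{-1})^{-1}|\Gamma|$ and doubling $|\Gamma'+\Gamma'|\le K|\Gamma'|$, $K\le\pol(\xi^{-1},c^{-1})$, and one can take $\Gamma'$ to contain the full fibre $\{y\}\times\Sigma(y)$ for a set $B_1$ of rows of density at least $\pol(\xi^{-1},c^{-1})^{-1}$ in $G$. Sanders's quantitative Freiman theorem over $\mathbb{F}_p^n$ then places $\Gamma'$ inside at most $m$ cosets of a subspace $H\le G\times G$ with $|H|\le m\,|\Gamma|$, where $m\le\exp(\pol(\log\xi^{-1},\log c^{-1}))$. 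Since the first projection of those cosets contains $B_1$, the subspace $\pi_1(H)$ has bounded codimension, whence the vertical fibre $H_0:=\{w:(0,w)\in H\}$ has $|H_0|\le|H|/|\pi_1(H)|\le\exp(\pol(\log\xi^{-1},\log c^{-1}))$. Fixing a linear section $L$ of $H$ over $\pi_1(H)$ and writing $v=(v_1,v_2)$, membership $(y,r)\in v+H$ forces $r\in v_2+L(y-v_1)+H_0$; hence for every $y\in B_1$ the whole of $\Sigma(y)$ lies among the at most $m|H_0|$ affine functions $y\mapsto v_2+L(y-v_1)+w$, $w\in H_0$. Deleting the rows $B_1$ and repeating on the remaining (still dense) rows, each pass handles a $\pol(\xi^{-1},c^{-1})^{-1}$-proportion of the remaining rows, so after $O(\pol(\xi^{-1},c^{-1})\log\xi^{-1})$ passes the set of rows not yet handled drops below $\xi|B|$; that set is the complement of $B'$, and the affine functions collected over all passes number at most $\exp(\pol(\log\xi^{-1},\log c^{-1}))\le\exp(\pol(\xi^{-1},c^{-1}))$.

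The one genuinely delicate point is the energy estimate of the second step: one must carry out the Cauchy--Schwarz manipulation so that a proportion of the additive quadruples that is polynomial in $\xi^{-1}$ and $c^{-1}$ survives with matched frequencies, and must set up the Balog--Szemer\'edi--Gowers and Freiman steps so that whole spectral fibres (not merely single frequencies) are captured in each pass --- this is what makes the outer iteration terminate after boundedly many passes rather than a number depending on $n$. The remaining ingredients --- the Parseval bookkeeping, Balog--Szemer\'edi--Gowers, and Sanders's Freiman bound --- are quoted directly, and one checks (as~\cite{GowersSzemeredi} remarks) that nothing in these steps is affected by replacing $\mathbb{Z}_N$ by $\mathbb{F}_p^n$.
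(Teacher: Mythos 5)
Your overall strategy---a $U^3$-type energy estimate for the spectrum, Balog--Szemer\'edi--Gowers, Sanders's Freiman theorem, and an outer iteration---is the same circle of ideas the paper uses. But the specific way you organise the iteration introduces a step that does not go through. You iterate row-by-row: in each pass you delete a set $B_1$ of rows $y$ whose \emph{entire} spectrum $\Sigma(y)$ has been covered by the newly found affine functions, and for this you assert that Balog--Szemer\'edi--Gowers can be ``set up so that $\Gamma'$ contains the full fibre $\{y\}\times\Sigma(y)$ for a dense set of rows $B_1$.'' That assertion is not a consequence of Balog--Szemer\'edi--Gowers: BSG returns a dense subset of $\Gamma$ with small doubling, but gives no control over fibres, and for all the theorem tells you $\Gamma'$ might meet every fibre in at most one point. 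Without full-fibre coverage, deleting the rows $B_1$ permanently discards the uncovered frequencies in those rows, and the termination argument ``the set of rows not yet handled drops below $\xi|B|$'' no longer implies the conclusion. You flag this as the ``genuinely delicate point'' but offer no mechanism to close it, and I do not see one that stays within your framing.

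The paper sidesteps this entirely by never trying to cover whole fibres at once. Instead of the graph $\Gamma$, it works with a \emph{choice function} $\sigma\colon A\to G$ picking a single remaining spectral element $\sigma(y)\in\Sigma(y)$ per row, so that Lemma~13.1 of~\cite{GowersSzemeredi} applies directly to give many additive quadruples of $\sigma$, and then the BSG$+$Sanders combination (Proposition~\ref{bsg}) produces an affine $\alpha$ agreeing with $\sigma$ on $\exp(-\pol(\log\xi^{-1}))N$ points. Only the pairs $(y,\alpha(y))$ are removed from the list; the iteration is pair-by-pair, and termination is by a global count of the at most $\xi^{-2}|B|$ pairs, not by deleting rows. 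This is both simpler and correct. (A secondary concern: your energy quantity is a complex-valued weighted sum over $\Gamma^4$ and could a priori exhibit cancellation; the paper avoids this by working with the manifestly nonnegative $\sum_y|\widehat{g_{\cdot y}}(\sigma(y))|^2$ and quoting Lemma~13.1, which is stated for choice functions. This is fixable but is another place where the graph formulation creates friction.) If you replace your outer loop by the pair-by-pair version with a single choice function per pass, your proposal essentially becomes the paper's proof.
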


\begin{proof}We note that Lemma 13.1 of~\cite{GowersSzemeredi} holds with exactly the same proof for $G = \mathbb{F}_p^n$ instead $\mathbb{Z}_N$. To prove this corollary, we iteratively pick large Fourier coefficients of $\widehat{g_{\cdot y}}$. Lemma 13.1 then tells us that any such choice, viewed as a map from $G$ to $G$, has many additive quadruples. To relate such a map to an affine map, we use the following corollary of the Balog-Szemer\'edi-Gowers theorem~\cite{BalogSzemeredi,GowersSzemeredi} and Sanders's bounds for Freiman's theorem~\cite{Sanders}.

\begin{proposition}\label{bsg} Let $A \subset G$ and $\phi \colon A \to G$ by any map with at least $c N^3$ additive quadruples. Then, there is an affine map $\alpha \colon G \to G$ such that $\alpha(x) = \phi(x)$ holds for at least $\exp(-\pol( \log(c^{-1}))) N$ of values $x \in G$.\end{proposition}

We proceed as follows. Begin by taking all $y \in B$ and for each $y$, list its large Fourier coefficients, i.e. all $r$ such that $\Big|\widehat{g_{\cdot y}}(r)\Big| \geq \xi$. By Parseval's identity the number of such $r$ is at most $\xi^{-2}$, since $\|g_{\cdot y}\|_\infty \leq 1$. At each step, we take some these coefficients, show that they are actually given by a common affine map, and remove them from the list. The procedure terminates when the number of $y$ such that at least one of its large Fourier coefficients remains in the list becomes not greater than $\xi |B|$.\\
\indent Suppose that we still have more than $\xi |B|$ of $y$ that have at least one Fourier coefficient in the list. Let $A$ be the set of such $y$, and define $\sigma \colon A \to G$ so that for every $y \in A$, $\sigma(y)$ is still in the list. Then
\[\sum_{y \in A}\Big|\widehat{g_{\cdot y}}(\sigma(y))\Big|^2 \geq \xi^3 N,\]
so Lemma 13.1 of~\cite{GowersSzemeredi} tells us that $\sigma$ has at least $\xi^{12}$ additive quadruples. Proposition~\ref{bsg} implies that there is an affine map $\alpha\colon G \to G$ that coincides in the value with $\sigma$ in at least $\exp(-\pol( \log(\xi^{-1}))) N$ elements of $G$. Remove these elements from the list and repeat the argument.\\
\indent The procedure therefore terminates after $k$ steps, where $k \leq \exp(\pol( \log(\xi^{-1})))$, giving affine maps $\alpha_1, \dots, \alpha_k \colon G \to G$. Let $B' \subset B$ be the set of $y \in B$ whose all Fourier coefficients were removed from the list. Thus, if $y \in B'$ and $r$ satisfies $\Big|\widehat{g_{\cdot y}}(r)\Big| \geq \xi$, then we have $r \in \{\alpha_1(y), \dots, \alpha_k(y)\}$, as claimed.\end{proof}

\noindent\textbf{Bilinear Bohr varieties generate structure.} We begin our work by proving that we may use a bilinear Bohr variety $\mathbf{B}$ to generate structure in the following sense. Imagine we pick some rows in $G \times G$, (which we view as the input) and then select those columns that have a dense intersection with the chosen rows and $\mathbf{B}$ (which we view as output). Then the set of coordinates of these columns contains a dense 
subspace. Thus, given any input, we obtain a very structured output.

\begin{proposition}\label{subspaceDenseContainment} Let $S \subset G$ have size $cN$ and let $\mathbf{B}$ be a bilinear Bohr variety defined by biaffine maps $\beta_1, \dots, \beta_k \colon G \times G \to \mathbb{F}_p$ of the form $\beta_i(x,y) = x \cdot \alpha_i(y)$ for an affine map $\alpha_i \colon G \to G$. Let $\mathbf{r}(x)$ be the number of $y \in S$ such that $x \in \mathbf{B}_{\cdot y}$, and let $X = \Big\{x \in G\colon \mathbf{r}(x) \geq \frac{c}{2p^k}N\Big\}$. Then $X$ contains a subspace $V$ of codimension at most $p^k(k + 4c^{-1}p^{2k})$. \end{proposition}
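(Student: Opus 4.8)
The plan is to express $\mathbf r(x)$ as a Fourier sum in the frequencies of $S$, and then read off the subspace $V$ from the locations of the large Fourier coefficients of $\mathbf{1}_S$. Write $\alpha_i(y)=L_iy+a_i$ with $L_i\colon G\to G$ linear and $a_i=\alpha_i(0)$, and put $V_0=\{a_1,\dots,a_k\}^{\perp}$, a subspace of codimension at most $k$. For $x\in V_0$ the biaffine constraint collapses to the bilinear one, $\beta_i(x,y)=x\cdot\alpha_i(y)=\langle L_i^{T}x,\,y\rangle$, so expanding each indicator $\mathbf{1}[\langle L_i^Tx,y\rangle=0]=p^{-1}\sum_{t_i\in\mathbb F_p}\omega^{t_i\langle L_i^Tx,y\rangle}$ and summing over $y\in S$ gives
\[\mathbf r(x)=\frac{N}{p^{k}}\sum_{t\in\mathbb F_p^{k}}\widehat{\mathbf{1}_S}\!\left(-L_t^{T}x\right),\qquad\text{where }L_t:=\sum_{i=1}^{k}t_iL_i,\]
for every $x\in V_0$; the term $t=0$ contributes $\widehat{\mathbf{1}_S}(0)=c$.

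Next I identify the bad set $Y=\{x\in V_0:\mathbf r(x)<\tfrac{c}{2p^{k}}N\}$ of points of $V_0$ not lying in $X$, and show it is covered by few affine subspaces that miss the origin. For $x\in Y$ the identity above forces $\sum_{t\neq 0}\widehat{\mathbf{1}_S}(-L_t^{T}x)<-c/2$. The terms with $L_t^{T}x=0$ each equal $c\ge 0$, so they may be discarded, whence $\sum_{t\neq 0,\,L_t^Tx\neq 0}|\widehat{\mathbf{1}_S}(-L_t^{T}x)|>c/2$; since there are fewer than $p^{k}$ summands, some $t\neq 0$ has $L_t^{T}x\neq 0$ and $|\widehat{\mathbf{1}_S}(-L_t^{T}x)|>c/(2p^{k})$. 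By Parseval the set $R=\{r\neq 0:|\widehat{\mathbf{1}_S}(r)|>c/(2p^{k})\}$ has size at most $4c^{-1}p^{2k}$, so
\[Y\ \subseteq\ \bigcup_{t\in\mathbb F_p^{k}\setminus\{0\}}\ \bigcup_{r\in R}\ \{x\in G:L_t^{T}x=-r\},\]
a union of at most $(p^{k}-1)\cdot 4c^{-1}p^{2k}\le 4c^{-1}p^{3k}$ affine subspaces. Each such affine subspace avoids $0$ (if $L_t^Tx=-r\neq 0$ then $x\neq 0$), hence lies inside a hyperplane $\{x:\ell(x)=1\}$ for some nonzero linear functional $\ell$.

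Finally, collecting these functionals $\ell_1,\dots,\ell_M$ with $M\le 4c^{-1}p^{3k}$ and setting $V=V_0\cap\bigcap_{j}\ker\ell_j$, any $x\in V$ has $\ell_j(x)=0\neq 1$ for all $j$ and so lies in no member of the cover; being also in $V_0$, it satisfies $\mathbf r(x)\ge\tfrac{c}{2p^{k}}N$, i.e.\ $x\in X$. Thus $V\subseteq X$ and $\codim V\le k+M\le k+4c^{-1}p^{3k}\le p^{k}(k+4c^{-1}p^{2k})$.

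The crux, and the reason for both passing to $V_0$ and excising the $L_t^{T}x=0$ terms, is to guarantee that the affine subspaces covering $Y$ do not pass through the origin: an affine subspace through $0$ cannot be avoided by any linear subspace, so without this the final intersection argument would be vacuous. Restricting to $V_0$ removes the constant parts $a_i$ (which would otherwise produce stray hyperplanes $\langle x,a_t\rangle=\mathrm{const}$), and dropping the $L_t^Tx=0$ terms removes the ever-present frequency $0\in R$; together these ensure that only genuinely shifted frequencies, witnessed by maps $L_t^{T}$ of positive rank, contribute to the cover. Everything else is a routine Fourier and Parseval computation.
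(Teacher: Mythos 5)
Your proof is correct. It agrees with the paper up to and including the reduction to $V_0=\{\alpha_1(0),\dots,\alpha_k(0)\}^\perp$, the Fourier formula $\mathbf r(x)=\frac{N}{p^k}\sum_t\widehat{\mathbf 1_S}(-L_t^T x)$ for $x\in V_0$, and the Parseval bound $|R|\le 4c^{-1}p^{2k}$, but the final step is genuinely different. The paper proceeds ``forwards'': it picks a maximal subspace $V_1\subset V_0$ with $R\cap V_1=\{0\}$ (codimension $\le k+4c^{-1}p^{2k}$), takes $V_2=\bigcap_\lambda\big(\sum_i\lambda_i\alpha_i^{LT}\big)^{-1}(V_1)$ so that every relevant frequency is forced to be either $0$ or outside the large spectrum, and then lower-bounds $\mathbf r(x)$ by an explicit triangle-inequality computation involving the multiplicity $s$ of the zero frequency. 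You argue ``backwards'': you define the bad set $Y\subset V_0$, show each $x\in Y$ lies on some affine subspace $\{L_t^T x=-r\}$ with $t\neq 0$, $r\in R$, observe that each such affine subspace misses $0$ and hence is contained in an affine hyperplane $\{\ell=1\}$, and then cut $V_0$ by the kernels $\ker\ell_j$. This avoids the redundant intersection of preimages of $V_0$ that the paper performs (each preimage in $V_2$ has codimension up to $k+4c^{-1}p^{2k}$, but only the $4c^{-1}p^{2k}$ part is doing any work), and consequently your codimension bound $k+4c^{-1}p^{3k}$ is slightly sharper than the stated $p^k(k+4c^{-1}p^{2k})=kp^k+4c^{-1}p^{3k}$. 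Your version is also cleaner in the bookkeeping since it dispenses with the multiplicity argument; the observation that an affine subspace missing the origin sits inside a hyperplane $\{\ell=1\}$ is the small extra idea that makes this work.
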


\begin{proof} Let first $V_0 \subset G = \{\alpha_1(0), \dots, \alpha_k(0)\}^{\bot}$, a subspace of codimension at most $k$. Observe that for $x \in V_0$, we have $(x,y) \in \mathbf{B}$ if and only if $x \cdot \alpha^L_i(y) = 0$ holds for all $i = 1,\dots, k$. Thus, for $x \in V_0$,  
\begin{equation*}\begin{split}\mathbf{r}(x) &= \sum_{y \in S} \mathbb{E}_{\lambda_1, \dots, \lambda_k \in \mathbb{F}_p}\omega^{-\sum_{i=1}^k \lambda_i x \cdot \alpha^L_i(y)} = \sum_{y \in S} \mathbb{E}_{\lambda_1, \dots, \lambda_k \in \mathbb{F}_p} \omega^{-y \cdot (\sum_{i=1}^k \lambda_i \alpha_i^{L T}(x))}\\ &=\mathbb{E}_{\lambda_1, \dots, \lambda_k \in \mathbb{F}_p} N \cdot \Big(\mathbb{E}_y S(y) \omega^{-y \cdot (\sum_{i=1}^k \lambda_i \alpha_i^{L T}(x))}\Big)\\
&=N\cdot \mathbb{E}_{\lambda_1, \dots, \lambda_k \in \mathbb{F}_p} \widehat{S}\bigg(\sum_{i=1}^k \lambda_i \alpha_i^{L T}(x)\bigg),  \end{split}\end{equation*}
where $\alpha^{LT}$ stands for the transpose of the linearization of an affine map $\alpha$.\\

Let $R$ be the large spectrum of $S$, defined by $R = \Big\{r\in G\colon |\widehat{S}(r)| \geq \frac{c}{2p^k}\Big\}$. Then $\frac{c^2}{4p^{2k}}|R| \leq \sum_r \Big|\widehat{S}(r)\Big|^2 = \mathbb{E}_y \Big|S(y)\Big|^2 = c$, so $|R|\leq 4c^{-1}p^{2k}$. Take any maximal subspace $V_1 \subset V_0$ such that $R \cap V_1 = \{0\}$, so $\operatorname{codim} V_1 \leq k + 4c^{-1}p^{2k}$. Finally, let
\[V_2 = \cap_{\lambda_1, \dots, \lambda_k \in \mathbb{F}_p} \bigg(\sum_{i=1}^k\lambda_i \alpha_i^{LT}\bigg)^{-1}\Big(V_1\Big),\]
which has codimension at most $p^k(k + 4c^{-1}p^{2k})$. We prove that this is the desired subspace.\\

Let $x \in V_2$. By definition, for any choice of $\lambda_1, \dots, \lambda_k$, we have $\sum_{i=1}^k\lambda_i \alpha_i^{LT}(x) \in V_1$. Thus, if $\sum_{i=1}^k\lambda_i \alpha_i^{LT}(x) \in R$, then $\sum_{i=1}^k\lambda_i \alpha_i^{LT}(x) = 0$. Hence,
\[\text{either  }\Big|\widehat{S}\Big(\sum_{i=1}^k\lambda_i \alpha_i^{LT}(x)\Big)\Big| < \frac{c}{2p^k}\text{  or  }\sum_{i=1}^k\lambda_i \alpha_i^{LT}(x) = 0.\]
Therefore, let $s$ be the number of choices of $\lambda_1, \dots, \lambda_k \in \mathbb{F}_p$ such that $\sum_{i=1}^k\lambda_i \alpha_i^{LT}(x) = 0$. In particular, $s\geq 1$, as we can set $\lambda_1 = \dots = \lambda_k = 0$. From this we deduce that when $x \in V_2$,
\[\Big|\frac{p^k}{N}\mathbf{r}(x) - s\hspace{2pt}\widehat{S}(0)\Big| \leq p^k \cdot\frac{c}{2p^k} = \frac{c}{2},\]
so $\mathbf{r}(x) \geq \frac{c}{2p^k}N$.
\end{proof} 

\begin{proof}[Proof of Theorem~\ref{mainTheorem}.] Let $A \subset G \times G$ be an arbitrary set of density $c$. First, convolve once in the $y$-direction, setting 
\[f(x,y) = A_{x \cdot } \conv A_{x \cdot} (y).\]
Thus, the set $A^{(1)}$ is exactly the support of $f$. In particular, we have
\[\mathbb{E}_{x,y} f(x,y) = \mathbb{E}_{x,y}A_{x \cdot } \conv A_{x \cdot} (y) \geq N^{-3}\sum_x \Big|A_{x \cdot}\Big|^2 \geq N^{-4} \Big(\sum_x |A_{x \cdot}|\Big)^2 \geq c^2.\]
Next, convolve twice in the $x$-direction, and define a new map $g \colon G \times G \to \mathbb{C}$ by
\[g(x,y)  = \Big(f_{\cdot y} \conv f_{\cdot y}\Big) \conv \Big(f_{\cdot y} \conv f_{\cdot y}\Big)(x).\]
This time, $A^{(2)} = \operatorname{supp} g$.\\[3pt]
\noindent\textbf{First application of Bogolyubov's argument.} By Bogolyubov's argument (Lemma~\ref{bogArg}), each row of $A^{(2)}$ contains a subspace given by the orthogonal complement of the large spectrum $S_y$ of $f_{\cdot y}$, given by $S_y = \Big\{r \colon \Big|\widehat{f_{\cdot y}}\Big| \geq \Big|\widehat{f_{\cdot y}}(0)\Big|^{\frac{3}{2}}\Big\}$, and we have $\Big|S_y\Big| \leq \Big|\widehat{f_{\cdot y}}(0)\Big|^{-2}$, and $\Big|\widehat{f_{\cdot y}}(0)\Big| = \mathbb{E}_x f(x,y)$. Since $\mathbb{E}_{x,y} f(x,y) \geq c^2$, by averaging, we have a set $Y \subset G$ of size at least $\frac{c^2}{2} N$ such that $\mathbb{E}_x f(x,y) \geq \frac{c^2}{2}$ for all $y \in Y$.\\

\noindent\textbf{Applying Corollary~\ref{FourierLinearity2}.} Next, for any fixed $\xi > 0$, by Corollary~\ref{FourierLinearity2}, we obtain $k \leq \exp(\pol(\log \xi^{-1}))$, a subset $Y' \subset Y$ of size at least $|Y'| \geq (1-\xi)|Y|$ and affine maps $\alpha_1, \dots, \alpha_k$ such that for $y \in Y'$, the Fourier coefficients $\Big|\widehat{f_{\cdot y}}(r)\Big| \geq \xi$ obey $r \in \{\alpha_1(y), \dots, \alpha_k(y)\}$. We take $\xi$ such that for all $y \in Y$, $\xi \leq \Big|\widehat{f_{\cdot y}}(0)\Big|^{\frac{3}{2}}$. But, for $y \in Y$, we have $\Big|\widehat{f_{\cdot y}}(0)\Big| \geq \frac{c^2}{2}$, so we may take $\xi = \frac{c^3}{4}$, making $k \leq \exp(\pol( \log c^{-1}))$.\\ 
\indent Let $\mathbf{B}$ be the bilinear Bohr variety defined by the $k$ biaffine maps $(x,y) \mapsto x \cdot \alpha_i(y)$. Hence, $A^{(2)}$ contains a set $C$ of the form $C = (G \times Y') \cap \mathbf{B}$, for $|Y'| \geq \frac{c^2}{4} N$ (as $\xi \leq \frac{1}{2}$).\\

\noindent\textbf{Second application of the Bogolyubov argument.} Finally, convolve in $y$ direction once again, setting
\[D = \cup_x\hspace{2pt}\{x\} \times \supp (C_{x \cdot} \conv C_{x \cdot}) \conv (C_{x \cdot} \conv C_{x \cdot}) \subset A^{(3)}.\]

Let $\chi$ be the indicator function of $Y'$, hence the indicator function of $C$ is $(x,y)\mapsto \mathbf{B}(x,y) \chi(y)$. By Bogolyubov's argument (Lemma~\ref{bogArg}), for every $x$, the set $D_x$ contains a subspace $T_x^\perp$, where $T_x = \Big\{r \colon \Big|\widehat{\mathbf{B}_{x \cdot} \cdot \chi}(r)\Big| \geq C_x^{3/2}\Big\}$ and $C_x = \Big|\widehat{\mathbf{B}_{x \cdot} \cdot \chi}\Big|(0) = \mathbb{E}_y \mathbf{B}_{x \cdot}(y) \chi(y) = N^{-1}\Big|Y' \cap \mathbf{B}_{x \cdot}\Big|$, and $|T_x| \leq C_x^{-2}$.\\

\noindent\textbf{Applying Proposition~\ref{subspaceDenseContainment}.} We now apply Proposition~\ref{subspaceDenseContainment} to $Y'$ as the input set. We obtain a subspace $V$ of codimension at most $20 c^{-2} p^{3k}$ such that $x \in V$ implies that $\Big|Y' \cap \mathbf{B}_{x \cdot}\Big| \geq \frac{c^2}{8p^k} N$. In particular, for all $x \in V$, we have $C_x \geq \frac{c^2}{8p^k}$. It remains to understand the structure of $T_x$ and to relate it to the large spectrum of the indicator function $\chi$ of $Y'$. For this purpose, we recall some basic properties of Fourier transforms.\\

\begin{lemma}\label{subspaceFourier}\begin{itemize}
\item[\textbf{(i)}] For any coset $u_0 + W$ of a subspace, we have
\[\widehat{u_0 + W}(r) = \omega^{-r\cdot u_0}\frac{W^\perp(r)}{|W^\perp|}.\]
\item[\textbf{(ii)}] For any coset $u_0 + W$ of a subspace, we have
\[\Big(\chi \cdot (u_0+ W)\Big)\widehat{\phantom{\Big)}} (r) = |W^\perp|^{-1}\sum_{s \in W^\perp} \omega^{u_0 \cdot s} \widehat{\chi}(r+s).\]
\end{itemize}\end{lemma}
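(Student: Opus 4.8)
The plan is to prove the two Fourier identities of Lemma~\ref{subspaceFourier} by direct computation, identifying indicator functions with $0/1$-valued functions and using the orthogonality of characters over a finite-dimensional $\mathbb{F}_p$-vector space. Throughout, for a subspace $W \leq G$ and $u_0 \in G$, I write $(u_0+W)$ for the indicator function of the coset $u_0+W$, so that by definition $\widehat{u_0+W}(r) = \mathbb{E}_{x \in G}(u_0+W)(x)\,\omega^{-r\cdot x}$.

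For part \textbf{(i)}, I would substitute $x = u_0 + w$ with $w$ ranging over $W$, so that
\[
\widehat{u_0+W}(r) = \frac{1}{N}\sum_{w \in W}\omega^{-r\cdot(u_0+w)} = \frac{\omega^{-r\cdot u_0}}{N}\sum_{w \in W}\omega^{-r\cdot w}.
\]
The inner sum is the character sum of $w \mapsto \omega^{-r\cdot w}$ over the subgroup $W$: it equals $|W|$ if the character is trivial on $W$, i.e. if $r \in W^\perp$, and $0$ otherwise. Hence $\sum_{w\in W}\omega^{-r\cdot w} = |W|\cdot W^\perp(r)$. Since $|W|\cdot|W^\perp| = N$ for a subspace of $\mathbb{F}_p^n$ (as $\dim W + \dim W^\perp = n$), we get $|W|/N = 1/|W^\perp|$, which yields the claimed formula $\widehat{u_0+W}(r) = \omega^{-r\cdot u_0}\,W^\perp(r)/|W^\perp|$.

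For part \textbf{(ii)}, the natural route is to use the convolution/multiplication duality of the Fourier transform: with the normalization used in the paper one has $\widehat{\phi\cdot\psi}(r) = \sum_{s}\widehat{\phi}(s)\,\widehat{\psi}(r-s)$ (this is a routine consequence of Fourier inversion on $G$, and I would state it as such or verify it in one line). Applying this with $\phi = \chi$ and $\psi = (u_0+W)$, and then inserting part \textbf{(i)} for $\widehat{u_0+W}(r-s) = \omega^{-(r-s)\cdot u_0}W^\perp(r-s)/|W^\perp|$, the sum over $s$ collapses to the coset $s \in r + W^\perp$; re-indexing $s = r+s'$ with $s' \in W^\perp$ (using that $W^\perp$ is a subspace) and noting $\omega^{-(r-s)\cdot u_0} = \omega^{s'\cdot u_0}$ gives
\[
\widehat{\chi\cdot(u_0+W)}(r) = |W^\perp|^{-1}\sum_{s' \in W^\perp}\omega^{u_0\cdot s'}\,\widehat{\chi}(r+s'),
\]
as desired. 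Alternatively one can avoid quoting the convolution identity and argue entirely from the definition, writing $\widehat{\chi\cdot(u_0+W)}(r) = \mathbb{E}_x \chi(x)(u_0+W)(x)\omega^{-r\cdot x}$, expanding the indicator of the coset as $(u_0+W)(x) = \mathbb{E}_{\lambda_1,\dots}\,(\cdots)$ — or more cleanly as $|W^\perp|^{-1}\sum_{s'\in W^\perp}\omega^{-s'\cdot(x-u_0)}$, which is the Fourier expansion of the coset indicator — and swapping the order of summation.

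I do not expect any genuine obstacle here: both identities are standard finite-field Fourier facts, and the only points requiring a little care are bookkeeping ones — getting the normalizations of $\widehat{\phantom{f}}$ and of the (unused here) convolution $\conv$ consistent with the conventions fixed in the Notation section, correctly using $\dim W + \dim W^\perp = \dim G$ to pass between $|W|/N$ and $1/|W^\perp|$, and handling the sign in the exponent when re-indexing $s \mapsto r+s'$. The cleanest exposition is probably to prove \textbf{(i)} from scratch as above and then derive \textbf{(ii)} either from \textbf{(i)} plus the multiplication–convolution duality, or directly by expanding the coset indicator via its own Fourier transform (which is itself \textbf{(i)}); I would present whichever is shorter given the duality conventions already in force.
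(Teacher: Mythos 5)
Your proposal is correct and follows essentially the same route as the paper: part (i) is a direct character-sum computation over the coset (the paper carries out the orthogonality-of-characters step by hand by splitting off a complement of $\langle w_0\rangle$, while you invoke it directly, but these are the same computation), and part (ii) is obtained by the multiplication–convolution identity combined with part (i) (the paper derives that identity inline via a $\sum_s \omega^{-s\cdot(y-x)}$ delta-expansion, then reaches $|W^\perp|^{-1}\sum_{s\in W^\perp}\omega^{-u_0\cdot s}\widehat{\chi}(r-s)$, equivalent to the stated form after the reindexing $s\mapsto -s$ that you carry out explicitly).
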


\begin{proof}\textbf{(i)} Suppose that there is some $w_0 \in W$ such that $w_0 \cdot r \not=0$. Let $W'$ be a subspace such that $W' \oplus \langle w_0 \rangle = G$. Then
\[\begin{split}\widehat{u_0+ W}(r) = \mathbb{E}_x (u_0+ W)(x) \omega ^{-rx} & = N^{-1} \sum_x (u_0+ W)(x) \omega ^{-rx}\\
&= N^{-1} \sum_{x \in W'} \sum_{\lambda \in \mathbb{F}_p} (u_0+ W)(x + \lambda w_0) \omega ^{-r\cdot (x + \lambda w_0)}.\end{split}\]
Note that for each $x$, either $(u_0+ W)(x + \lambda w_0) = 0$ for all $\lambda$, or $(u_0+ W)(x + \lambda w_0) = 1$ for all $\lambda$, and in either case $\sum_{\lambda \in \mathbb{F}_p} (u_0+ W)(x + \lambda w_0) \omega ^{-r\cdot (x + \lambda w_0)} = 0$, proving that $\widehat{u_0+ W}(r) = 0$ if $r \notin W^\perp$.\\
\indent On the other hand, if $r \in W^\perp$, we then have $\omega ^{-rx} = \omega^{-r\cdot u_0}$ for all $x \in u_0 + W$, therefore 
\[\widehat{u_0+ W}(r) = N^{-1}\sum_x (u_0+ W)(x) \omega ^{-rx} = N^{-1}\sum_{x \in u_0+ W} \omega ^{-r \cdot u_0} = \frac{|W|}{N} \omega^{-r \cdot u_0} = \omega^{-r\cdot u_0}\frac{W^\perp(r)}{|W^\perp|},\]
as desired.\\
\noindent \textbf{(ii)} We have
\[\begin{split}\Big(\chi \cdot (u_0+ W)\Big)\widehat{\phantom{\Big)}} (r) &= \mathbb{E}_{x,y} \chi(x) (u_0 + W)(y) \omega^{-r \cdot x} \sum_s \omega^{-s \cdot (y-x)}\\
&= \sum_s \Big(\mathbb{E}_x \chi(x) \omega^{-(r-s)\cdot x}\Big)\Big( \mathbb{E}_y (u_0 + W)(y) \omega^{-s\cdot y}\Big) \\
&= \sum_s \widehat{\chi}(r-s) \widehat{u_0 + W}(s)\\
&= |W^\perp|^{-1}\sum_{s \in W^\perp} \omega^{-u_0 \cdot s} \widehat{\chi}(r-s).\end{split}\]\end{proof}

\noindent Recall that for all $x$, the set $D_x$ contains $T_x^\perp$, where $T_x = \Big\{r \colon \Big|\widehat{\mathbf{B}_{x \cdot} \cdot \chi}(r)\Big| \geq C_x^{3/2}\Big\}$ and $C_x = \Big|\widehat{\mathbf{B}_{x \cdot} \cdot \chi}\Big|(0) = \mathbb{E}_y \mathbf{B}_{x \cdot}(y) \chi(y) = N^{-1}\Big|Y' \cap \mathbf{B}_{x \cdot}\Big|$, and $|T_x| \leq C_x^{-2}$. For $x \in V$, we have $C_x \geq \frac{c^2}{8p^k}$, so $|T_x| \leq 64c^{-4}p^{2k}.$ Note that $\mathbf{B}_{x \cdot} \cap Y'$ is relatively large, and
\[\begin{split}\mathbf{B}_{x \cdot} &= \Big\{y \colon (\forall i\in[k])x \cdot \alpha_i(y)= 0\Big\}\\ 
&= \cap_{i \in [k]} \Big\{y \colon x \cdot \alpha_i(y) = 0\Big\}\\
&= \cap_{i \in [k]} \Big\{y \colon x \cdot (\alpha^L_i(y) + \alpha_i(0)) = 0\Big\}\\
&= \cap_{i \in [k]} \Big\{y \colon \alpha^{LT}_i(x) \cdot y = -\alpha_i(0) \cdot x\Big\},\end{split}\]
thus, for $x \in V$, the set $\mathbf{B}_{x \cdot}$ is a coset of $\{\alpha^{LT}_i(x)\colon i\in[k]\}^\perp$. By Lemma~\ref{subspaceFourier}, we have 
\[\Big|\widehat{\mathbf{B}_{x \cdot} \cdot \chi}(r)\Big| \leq \mathbb{E}_{s \in (\{\alpha^{LT}_i(x)\colon i\in[k]\}^\perp)^\perp} |\widehat{\chi}(r + s)|.\]
In particular, if $T = \Big\{r\colon \Big|\widehat{\chi}(r)\Big| \geq \frac{c^3}{24p^{3k/2}}\Big\}$, then we have that
\[\text{if }r \notin T+ \{\alpha^{LT}_i(x)\colon i\in[k]\},\text{ then }\Big|\widehat{\mathbf{B}_{x \cdot} \cdot \chi}(r)\Big| < \frac{c^3}{24p^{3k/2}} \leq \Big|\widehat{\mathbf{B}_{x \cdot} \cdot \chi}(0)\Big|^{3/2}.\]
From this, we conclude that $T_x \subset T + \{\alpha^{LT}_i(x)\colon i\in[k]\}$, for all $x \in V$, and therefore 
$D$ contains $V \times T^\perp) \cap \mathbf{B^L}$, where $\mathbf{B^L}$ is the bilinear Bohr variety defined by maps $(x,y) \mapsto x \cdot \alpha^L_i(y)$, $\operatorname{codim} V  \leq p^k(k + 4c^{-1}p^{2k})$, $|T| \leq 600c^{-6}p^{3k}$ and $k \leq \exp(\pol(\log c^{-1}))$. This concludes the proof of the theorem.\end{proof}

\iftrue
\else

\boldSection{Low-rank Trilinear Forms}

\begin{proof}[Proof of Theorem~\ref{trilinearTheorem}.] Let $A \subset U \times V$ be defined by
\[A = \{(u,v) \in U \times V \colon T(u)(v) = 0\}.\]
Observe that for each $v \in V$, the set $A_{\cdot v}$ is a subspace, and so is $A_{u \cdot}$ for every $u \in U$. From the assumptions on the ranks of maps, using the rank-nullity theorem, we have $|A| \geq c p^{-k} |U||V|$. Apply Corollary~\ref{bisubspace} to $A$ to find $U' \leq U, V' \leq V$ and a bilinear Bohr variety $\mathbf{B^L}$ defined by bilinear maps $\beta_1, \dots, \beta_m$ such that $(U' \times V') \cap \mathbf{B} \subset A$, with
\[\codim_U U', \codim_V V', m \leq \exp(\exp(\pol(\log c^{-1}, k , \log p))).\]
Therefore, whenever $u \in U', v \in V'$ and $\beta_i(u,v) = 0$ holds for all $i \in [m]$, then we have $T(u)(v) = 0$. From this we deduce that
\[\begin{split}&\text{for each }u \in U',\text{ we have }\dim \{T(u)(v) \colon v \in V'\} \leq m,\text{and}\\
&\text{for each }v \in V',\text{ we have }\dim \{T(u)(v) \colon u \in U'\} \leq m.\end{split}\]

To finish the proof, we recall and prove Theorem~\ref{dichotomy}.

\begin{proposition}[Theorem~\ref{dichotomy}, Ramsey-like theorem for bilinear maps.] For any $a,b \in \mathbb{N}$, there is $K$ (we may take $K = (2p)^{\frac{1}{2}(a+1)^2 b}$) such that the following holds. If we are given $\mathbb{F}_p$-vector spaces $X, Y, Z$ and a bilinear map $\beta \colon X \times Y \to Z$ such that $\operatorname{im} \beta$ contains at least $K$ linearly independent vectors, then there is $x \in X$ such that $\operatorname{rank} \beta_{x \cdot} \geq a$ or there is $y \in Y$ such that $\operatorname{rank} \beta_{\cdot y} \geq b$.\end{proposition}

\begin{proof} We prove the claim by induction on $a$. For the basis of induction, note that for any $b \in \mathbb{N}$, if $\operatorname{im} \beta$ has at least one non-zero vector, we may simply pick $(x,y)$ such that $\beta(x,y) \not= 0$ and then $\operatorname{rank} \beta_{x \cdot} \geq 1$, so the propostion holds (with $K(1,b) = 1$ instead of $(2p)^{2b}$).\\

Assume that the claim holds for $a-1, b$. Let $K' = (2p)^{\frac{1}{2}a^2 b}$. Since $\operatorname{im} \beta$ contains at least $K'$ linearly independent vectors, we can apply the induction hypothesis, and we are either done, or, there are $x^{(1)} \in X, y^{(1)}_1, \dots, y^{(1)}_{a-1} \in Y$ such that $\beta(x^{(1)}, y^{(1)}_1), \dots, \beta(x^{(1)}, y^{(1)}_{a-1})$ are linearly independent. We have $K$ linearly independent vectors $\beta(u_1, v_1), \dots, \beta(u_k, v_k)$, where $u_i \in X, v_i \in Y$ and w.l.o.g. $\beta(x^{(1)}, y^{(1)}_1), \dots, \beta(x^{(1)}, y^{(1)}_{a-1})$ are among these.\\
\indent Notice that the number of $i$ such that $u_i \in \langle x^{(1)} \rangle$ is at most $(a-1)(p-1)$, otherwise by pigeonhole principle, some $\beta_{u_i \cdot}$ has rank at least $a$. Now pick a subspace $X_1 \leq X$ of codimension 1, by taking a vector $w \in X$ uniformly at random among vectors such that $w \cdot x^{(1)} = 1$, and define $X_1 = \langle w \rangle ^\perp$. Given any $u \in X$, we have
$$\mathbb{P}(u \in X_1) = 
\begin{cases} 
      1 & u = 0\\
      0 & u \in \langle x^{(1)}\rangle \setminus \{0\}\\
      p^{-1} & \text{otherwise.}
\end{cases}$$
In particular, the expected number of $i$ such that $(u_i, v_i) \in X_1 \times Y$ is at least $(K - (a-1)(p-1))/p \geq K/(2p)$.\\
\indent Hence, we now have $X_1$ of codimension 1 in $X$, such that, $\beta \colon X_1 \times Y \to Z$ has at least $K_1 = (2p)^{\frac{1}{2}(a+1)^2 b - 1}$ linearly independent vectors in its image. In fact, we also have that these vectors are linearly independent from $\beta(x^{(1)}, y^{(1)}_1), \dots, \beta(x^{(1)}, y^{(1)}_{a-1})$. Therefore, if we define
\[Z_1 = \langle \beta(x^{(1)}, y^{(1)}_1), \dots, \beta(x^{(1)}, y^{(1)}_{a-1})\rangle,\]
and denote by $\pi_1 \colon Z \to Z/Z_1$ the projection map, then $\pi_1 \circ \beta \colon X_1 \times Y \to Z/Z_1$ has at least $K_1$ linearly independent vectors in its image.\\
\indent We can apply the same argument again, to obtain $x^{(2)} \in X_1,$ $y^{(2)}_1, \dots,$ $y^{(2)}_{a-1} \in Y$ such that $\beta(x^{(2)}, y^{(2)}_1), \dots,$ $\beta(x^{(2)}, y^{(2)}_{a-1})$ are linearly independent in $Z/Z_1$. Returning to $Z$, this means that in fact the $2(a-1)$ vectors
\[\beta(x^{(1)}, y^{(1)}_1), \dots, \beta(x^{(1)}, y^{(1)}_{a-1}), \beta(x^{(2)}, y^{(2)}_1), \dots, \beta(x^{(2)}, y^{(2)}_{a-1})\]
are linearly independent.\\

We iterate this argument. Because we start with $K = (2p)^{\frac{1}{2}(a+1)^2b}$ and each time we pass to a smaller subspace, we lose a factor of at most $2p$, we can apply this argument $ab$ times in total (as $\frac{1}{2}(a+1)^2b > ab + \frac{1}{2}a^2b$), to obtain linearly independent $x^{(1)}, \dots, x^{(ab)} \in X$, and elements $y^{(i)}_j \in Y$ for $i \in [ab], j \in [a-1]$, such that the vectors $\beta(x^{(i)}, y^{(i)}_j)$ are all linearly independent. Observe that, unless we are either done, we have that for all distinct $i, i' \in [ab]$, $\operatorname{im} \beta_{x^{(i)} \cdot} \cap \operatorname{im} \beta_{x^{(i')} \cdot} = \{0\}$ holds.\\
\indent Now, consider $\beta(x^{(i)}, y^{(1)}_1), \dots, \beta(x^{(i)}, y^{(1)}_{a-1}) \in \operatorname{im} \beta_{x^{(i)} \cdot}$. If it turns out that these are all zero, looking at $\operatorname{im} \beta_{x^{(i)} + x^{(1)} \cdot}$, we see that it contains $\beta(x^{(1)}, y^{(1)}_j)$ for all $j \in [a-1]$, and $\beta(x^{(1)}, y^{(i)}_j) + \beta(x^{(i)}, y^{(i)}_j)$, so we get at least $2(a-1) \geq a$ linearly independent vectors (since $\beta(x^{(1)}, y^{(i)}_j) \in \operatorname{span} \Big\{\beta(x^{(1)}, y^{(1)}_l) \colon l \in [a-1]\Big\}$ for each $j$), as desired.\\
\indent Otherwise, we get for each $i \in [ab]$ a non-zero value $\beta(x^{(i)}, z_i) \in \operatorname{im} \beta_{x^{(i)} \cdot}$, with $z_i \in \{y^{(1)}_j\colon j \in [a-1]\}$. Hence, these $ab$ values are linearly independent vectors and belong to $\Big(\cup_{i \in [a-1]} \operatorname{im} \beta_{\cdot y^{(1)}_j}\Big)$, so by pigeonhole principle, there is is $y^{(1)}_j$ such that $\operatorname{rk} \beta_{\cdot y^{(1)}_j} \geq b$, as desired.\end{proof}

Applying this proposition to the bilinear map $\restr{T}{U' \times V'}$, we have that $E = \operatorname{span} \{T(u)(v) \colon u \in U', v \in V'\}$ has dimension bounded above by $2^{O_p(m^3)}$. This completes the proof.\end{proof}

\fi

\thebibliography{9}
\bibitem{BalogSzemeredi} A. Balog and E. Szemer\'edi. A statistical theorem of set addition. \emph{Combinatorica}, 14, 263--268, 1994.
\bibitem{BienvenuLe} P.-Y. Bienvenu and T.H. L\^e. A bilinear Bogolyubov theorem. \emph{Preprint}, available at \url{https://arxiv.org/1711.05349}
\bibitem{BogPaper} N. Bogolio\`uboff. Sur quelques propri\'et\'es arithm\'etiques des presque-p\'eriodes. \emph{Ann. Chaire Phys. Math. Kiev}, 4:185--205, 1939.
\bibitem{GowersSzemeredi} W.T. Gowers. A new proof of Szemer\'edi's theorem. \emph{Geom. Funct. Anal.}, 11(3):465--588, 2001.
\bibitem{GM1} W.T. Gowers and L. Mili\'cevi\'c. A quantitative inverse theorem for $U^4$ norm over finite fields. \emph{Available on arXiv.}
\bibitem{GM2} W.T. Gowers and L. Mili\'cevi\'c. A quantitative inverse theorem for $U^4$ norm over finite fields: further results. \emph{In preparation.}
\bibitem{GTU3} B.J. Green and T.C. Tao. An inverse theorem for the Gowers $U^3$ norm. \emph{Proc. Edinb. Math.
Soc.} (2), 51(1):73--153, 2008.
\bibitem{Ruzsa} I.Z. Ruzsa. Generalized arithmetical progressions and sumsets. \emph{Acta Math. Hungar.}, 65(4):379--388, 1994.
\bibitem{Sanders} T. Sanders. On the Bogolyubov-Ruzsa lemma. \emph{Anal. PDE}, 5(3):627--655, 2012.
\end{document}